\newtheorem{theorem}{Theorem}
\newtheorem{lemma}[theorem]{Lemma}
\newtheorem{definition}[theorem]{Definition}
\newtheorem{example}[theorem]{Example}
\newcommand{\R}{{\mathbb R}}
\DeclareMathOperator{\AC}{AC}
\begin{document}

\title{The Lavrentiev Phenomenon}
\author{
Rapha\"el  Cerf
and
 Carlo Mariconda
}
\maketitle

\begin{abstract} The basic problem of the calculus of variations consists of finding a function that minimizes an energy,
	like finding the fastest trajectory between two points
	for a point mass in a gravity field
	or finding the best shape of a wing. The existence of a solution may
	be established in quite abstract spaces, much larger than the space of smooth functions.
	An important practical problem is
	that of being able to approach the value of the
	infimum of the energy.
	However, numerical methods work with very ``concrete'' functions and sometimes they are unable to approximate the infimum:
this is the surprising Lavrentiev phenomenon. The papers that ensure the non-occurrence of the phenomenon form a recent saga, and the most general result formulated in the early '90s was actually fully proved just recently, more than 30 years later.
Our aim here
is to introduce the reader to the calculus of variations,
to illustrate the Lavrentiev phenomenon with the simplest
known example, and to give an elementary proof of the non-occurrence of the phenomenon.
\end{abstract}

\noindent

\section{Introduction.}
Consider a positive smooth function $y:[a,b]\to [0, +\infty)$. The area of the surface obtained by rotating the graph of $t\mapsto y(t)$ around the $t$-axis  (see Figure~\ref{fig:rotation_surface}) is given by
\begin{equation}\label{ex1}
F(y)=2\pi\int_a^by(t)\sqrt{1+y'^2(t)}\,dt.\end{equation}
Once we fix the initial and final values $y(a)=A, y(b)=B$, is there a function that gives the infimum of $F$?
It turns out that the answer is positive if the values of $A, B$ are big enough with respect to $b-a$, otherwise
the infimum is
the sum of the area of the circles of radii $A$ and $B$, but of course there is no function whose graph is the union of the segments $[(a,A), (a,0)], [(a,0), (b,0)], [(b,0), (b,B)]$, and the
infimum of $F$ is thus not reached.
Problems like this fall into a general scheme,
called the {Basic problem of the Calculus of Variations},
which we describe next.
\begin{figure}[!ht]
\begin{center}
\vskip-10pt
\includegraphics[width=0.7\textwidth]{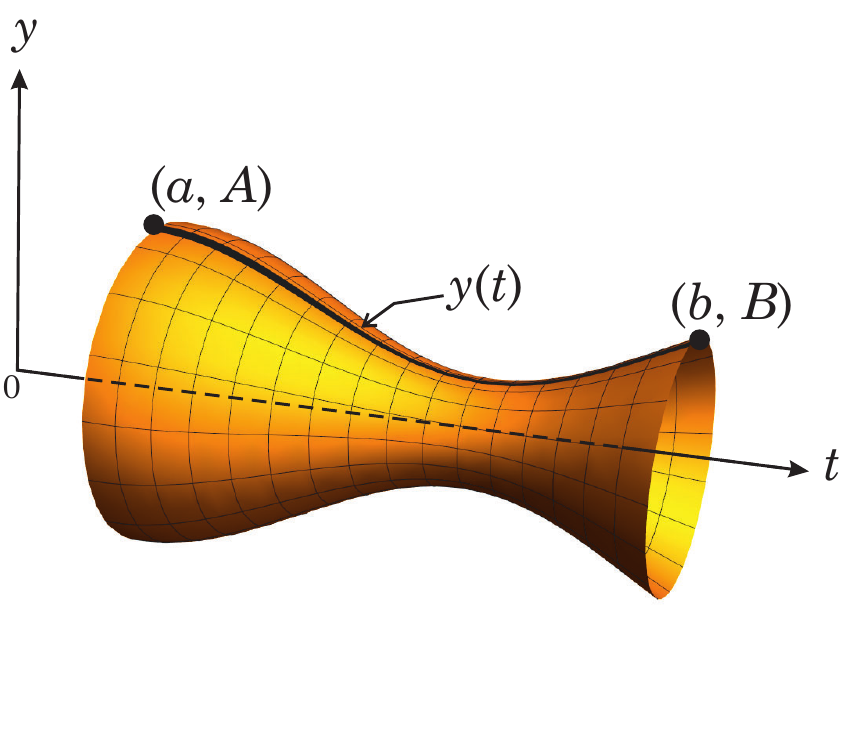}
\vskip-30pt
\caption{
{A surface obtained by rotating the curve $y(t)$ around the $t$--axis.}}
\label{fig:rotation_surface}
\end{center}
\vskip-20pt
\end{figure}

\noindent
Let $L(t, y,v)$ be a non--negative $C^1$ function defined on
$[a,b]\times\R\times\R$.
To every continuous, piecewise $C^1$ function  $y:[a,b]\to \R$ (or just absolutely continuous, see below), we associate the \textbf{ integral functional}
\begin{equation}\label{if}
F(y)=\int_a^bL(t, y(t), y'(t))\,dt.\end{equation}
The Basic Problem of the Calculus of Variations, briefly denoted by
\begin{equation}\label{tag:P}\min\, F(y):\quad y\in\mathcal E,\quad  y(a)=A,\, y(b)=B\,,\end{equation}
is to find a function $y$ that minimizes the value $F(y)$ among the functions $y$ belonging to a suitable space $\mathcal E$   and satisfying the \textbf{boundary conditions} $y(a)=A, y(b)=B$.

In this context $L$ is called a \textbf{Lagrangian} in honour of Joseph-Louis Lagrange, who was the first to prove the Euler-Lagrange equation (only conjectured by Leonard Euler); actually the name Calculus of Variations arises from the variation technique of Lagrange's proof.
When dealing with $L$ as a function of two variables, we use the letter $y$ for the position and $v$ for the speed, so we write $L(t, y,v)$. Inside the integral functional $F(y)$, the letter $v$ is naturally
replaced by the time derivative
$y'(t)$ of the trajectory.

The  first  question that has to be addressed is whether there
exists a function $y$ in $\mathcal E$ realizing the infimum of $F$,
that is whether \eqref{tag:P} has a solution. Of course, the answer to this
question depends crucially on
the choice of
the admissible class $\mathcal E$ of functions.
A natural possible choice  for $\mathcal E$  is the
set of the functions which are of class $C^1$ on $[a,b]$, or
even continuous and piecewise $C^1$. Unfortunately, there is no satisfactory general
existence result for this choice. The most natural choice is to work with
absolutely continuous functions, whose definition is recalled next.
\begin{definition}
	\label{absco}
	A function $y$ belongs to the space
	$\AC([a,b])$
	of the \textbf{absolutely continuous}
	functions on the interval $[a,b]$
	if and only if there exists a Lebesgue integrable function $f$
	defined on $[a,b]$ such that
	\begin{equation}\label{irep}
	\forall t\in [a,b]\qquad y(t)=y(a)+\int_a^tf(s)\,ds\,.\end{equation}
\end{definition}
\noindent
A function $y$ admitting a representation like~\eqref{irep} is differentiable
almost everywhere, its derivative $y'$ being equal to $f$ outside a Lebesgue
negligible set. In particular, the integral functional $F(y)$ of
such a function is well-defined and the problem \eqref{tag:P} can be studied
within the class $\mathcal E= \AC([a,b])$.
Moreover the space
$\AC([a,b])$
contains not only the continuous and piecewise $C^1$ functions but also the
Lipschitz functions on $[a,b]$
(but it is far from obvious to prove
that a Lipschitz function admits a representation like~\eqref{irep}).
In fact,
Lipschitz functions on $[a,b]$ can be identified as the
functions of
$\AC([a,b])$ whose
derivative (defined outside a Lebesgue negligible set)
is bounded on $[a,b]$.
For instance, the square root function $\smash{\sqrt t}$ is absolutely continuous
but not Lipschitz on $[0,1]$ since $y'(t)=\smash{1/({2\sqrt t})}$,
though integrable,  is not bounded on $[0,1]$.
Functionals $F$ whose minima belong to the Lipschitz functions have the advantage that they can be handled with numerical methods.
However, there are cases in which the minima of $F$
exist but are not Lipschitz.
\noindent
Leonida Tonelli's existence result (see \cite{GBH}) guarantees the existence of
a solution belonging to the space
$\mathcal E =\AC([a,b])$,
once
some reasonable conditions are satisfied.

In any case,
whether a minimizer of $F$ exists or not,
we would like to approach the value of the infimum of $F$ (which exists since $F\ge 0$)
with very concrete functions. To this end,
we implement numerical
methods, which traditionally involve functions
with bounded derivatives. In some cases, this works very well, as shown in the following example.
\begin{example}\label{ex:yy}
Consider the problem of minimizing
\begin{equation}\label{ex3}
F(y)=\int_0^1(2yy'-1)^2\,dt,\quad y(0)=0,\, y(1)=1\,.\end{equation}
The function
$y_*(t)=\sqrt{t}$ is the global minimum of $F$.
Yet its derivative
$y'_*(t)$ is unbounded.
Is there a way to find a sequence $(y_n)_{n\geq 1}$ of piecewise $C^1$ functions with bounded derivatives with $y_n(0)=0, y_n(1)=1$, and such that $F(y_n)\to F(y_*)=0$ as $n\to +\infty$? The answer is affirmative here.
Consider the sequence
$(y_n)_{n\geq 1}$
depicted in Figure~\ref{fig:smoothy} and defined by
$$y_n(t)=\begin{cases}\,tn^{1/2}&t\in[0, 1/n]\\
\,t^{1/2}&t\in[1/n,1]\end{cases}\,.$$
\begin{figure}[!ht]
\begin{center}
\includegraphics[width=0.8\textwidth]{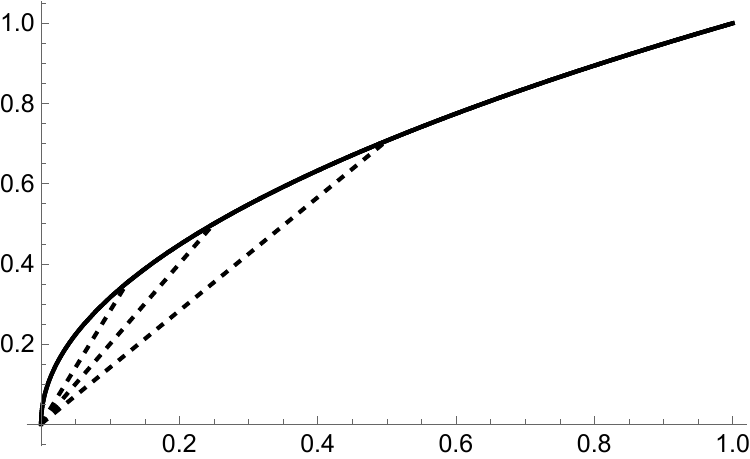}
\caption{{\small The sequence $y_n$ for Example~\ref{ex:yy}.}}
\label{fig:smoothy}
\end{center}
\vskip-10pt
\end{figure}
\FloatBarrier
\noindent
The function
$y_n$ has a bounded derivative and
\[F(y_n)\,=\,\int_0^{1/n}\kern-7pt
	(2tn-1)^2\,dt\,\le\,
\int_0^{1/n}\kern-7pt
\big((2tn)^2+1\big)\,dt\,\leq\,\dfrac3n\to 0\quad\text{as}\quad n\to +\infty\,.\]
\end{example}
In the general case this approximation is not obvious at all: this depends partly on the fact that the integral is not continuous with respect to pointwise convergence of functions.
Mikhail Lavrentiev in \cite{Lavrentiev} discovered in 1926 that there are innocent-like  problems of the Calculus of Variations for which there is a
strictly positive gap between
the infimum value of $F$ and the value taken at any function with bounded derivative.
Basilio Mani\`a in \cite{Mania}  gave a simplified example (presented in Example~\ref{ex:mania2}), where
\begin{equation}\label{ex4}
L(t,y,v)=(y^3-t)^2v^6,\quad y(0)=0,\, y(1)=1\,.\end{equation}
Notice that this Lagrangian is non--autonomous, meaning that it depends on the
time variable~$t$.
What happens here is that there is a minimizer $y_*$ and $F(y_*)=0$; however there is $\eta>0$ such that $F(y)\ge\eta$ whenever $y$ is Lipschitz and satisfies the boundary conditions.
The story of the conditions that ensure  the non-occurrence of the phenomenon is a sort of mathematical saga.
Of course there is no phenomenon if one knows that the minimizer exists and is Lipschitz, i.e., has a bounded derivative. This direction is part of the so-called  regularity theory that began with the
existence theory by Tonelli himself in early 1900s and had a new impulse in the 80s with the pioneering work by Francis Clarke and Richard Vinter \cite{CVTrans}: all results in this framework require a kind of nonlinear growth condition from below on $L$. Otherwise,  unless one assumes unnatural growth conditions of $L$ from above,
the  technical problem is to pass to the limit under the integral sign.
In Mani\`a's example, the Lagrangian depends on the time variable $t$. A celebrated result by Giovanni Alberti and Francesco Serra Cassano in \cite{ASC} shows that if $L$ is autonomous
(i.e., $L$ is as in problem~$\eqref{tag:P}$, it does not depend on $t$) and just {\em bounded on bounded sets}, then the phenomenon does not occur. Actually, the proof is given there just for problems with one prescribed boundary condition $y(a)=A$. The case with two prescribed boundary data is not only technically different: indeed  there are  cases, like the one described in  Problem~\eqref{mania2}, where the phenomenon may occur if one fixes two end-points but no more if one let one end-point to vary. The  general case was conjectured by Giovanni Alberti and proven by Carlo Mariconda in \cite{CM5}, more than 30 years later.
The methods around the Lavrentiev phenomenon are almost elementary, but as the previous
short story shows, the intuition is fallacious and there are steps that always must be carefully justified.

The aim of this paper is twofold:
we give the simplest known
example exhibiting the Lavrentiev phenomenon (which is a
further simplification of an example given in
\cite{CerfM})
and we provide a very elementary proof of the avoidance of the phenomenon, under some extra assumptions on the Lagrangian $L(y,v)$, namely  convexity in $v$. It is inspired by the one given in \cite{CFM} by  Arrigo Cellina, Alessandro Ferriero and Elsa Marchini for continuous Lagrangians,
and to its recent nonsmooth extension provided in \cite{CM7, CM8}. We point out that \cite{CFM} was the first paper with a complete proof of the non occurrence of the Lavrentiev phenomenon for the problem with {\em two} prescribed end point conditions,  without the need of any kind of growth conditions.
Before embarking into this program, we provide a little introduction to the Calculus of Variations through the presentation of some classical examples,
which illustrate and raise the basic
questions of the existence and regularity of minimizers.
We present the two fundamental conditions satisfied by the minima of the problem \eqref{tag:P}, namely the equations of
Euler--Lagrange and Du Bois-Reymond. We finally give the proof of
the avoidance of the phenomenon. This proof is quite long, it is divided into seven steps that involve some standard results from measure theory, a judicious reparametrization of
an approximating solution and a delicate control of its energy.

The Lavrentiev phenomenon is observed in more general contexts; however, our focus is solely on the one-dimensional case. We recommend that readers who are well-versed in this subject consult \cite{BB} for a thorough survey and  \cite{BMT, BousquetAUTON} for the latest findings in the multidimensional, autonomous scenario.
\section{Classical examples and questions.}
Examples and concrete problems have played a key role
in the development of the theory
of Calculus of Variations. In this section, we present some classical examples
and important questions raised by them.
At the beginning of the introduction, we mentioned the problem of
minimal rotation surfaces, that consists of minimizing \eqref{ex1}. Here is another classical question.
\begin{example}[\textbf{Brachistocrone}]\label{ex:Brachi}
Find the quickest
	trajectory
	of a point mass
in a gravity field
	between two points $P=(a, A)$ and $Q=(b, B)$.
	The problem was formulated by Galileo Galilei in 1638 who conjectured, erroneously, that the solution was a circular path. The correct solution was found by Johann Bernoulli in 1697.
	In a Cartesian coordinate system with gravity acting in the direction of the negative $y$ axis  and $A>B$,  the problem consists in
	finding $y:[a,b]\to \mathbb R$ that solves the problem
	\begin{equation}\label{ex5}
		\min\int_a^b\dfrac{\sqrt{1+y'^2}}{\sqrt{A-y}}\,dt
:\, y(a)=A,\, y(b)=B\,.
\end{equation}
In Figure~\ref{fig:brachi} is the solution, it is a cycloid arc called the
brachistochrone curve (in ancient greek, brachistochrone means 'shortest time').
\begin{figure}[!ht]
\begin{center}
\includegraphics[width=0.7\textwidth]{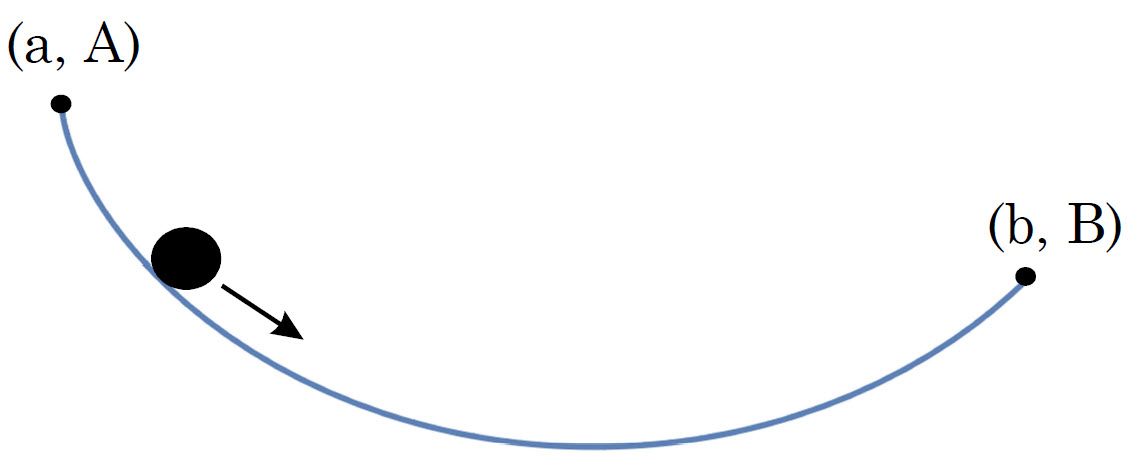}
\caption{{\small The brachistocrone in Example~\ref{ex:Brachi}.}}
\label{fig:brachi}
\end{center}
\vskip-10pt
\end{figure}
\end{example}
\noindent
In particular, the solution is $C^1$, so this problem can be handled within
the space $\mathcal E=C^1$.
Unfortunately,
as the next example demonstrates,
the minimum of $F$ may exist out of the
class of $C^1$ functions.  From now onwards,
we denote by $I$ the interval of definition of the
unknown function.
\begin{example}
[\textbf{A simple example with no $C^1$ solution}]\label{ex:noxistenceC1}
The problem
\begin{equation}\label{ex7}
\min \,F(y)=\int_0^1(y'^2-1)^2\,dt:\, y(0)=0, \, y(1)=0\,,\end{equation}
has no solutions among $C^1$ functions. Indeed, let
$$y_*(t)=\begin{cases}t&\text{ if }t\in [0, 1/2],\\ 1-t&\text{ if }t\in[1/2,1].\end{cases}\,$$
Notice that $(y'_*)^2=1$.
For each $n\geq 1$, one may build a $C^1$ function $y_n$
such that:
\begin{itemize}
\item  $0\le y_n\le y_*$ on $[0,1]$;
\item $y_n(t)=y_*(t)$ for all $t\in I\setminus [1/2-1/n, 1/2+1/n]$;
\item $0\le y'_n\le 1$ on $[0,1]$\,.
\end{itemize}
\begin{figure}[!ht]
\begin{center}
\vskip-10pt
\includegraphics[width=0.7\textwidth]{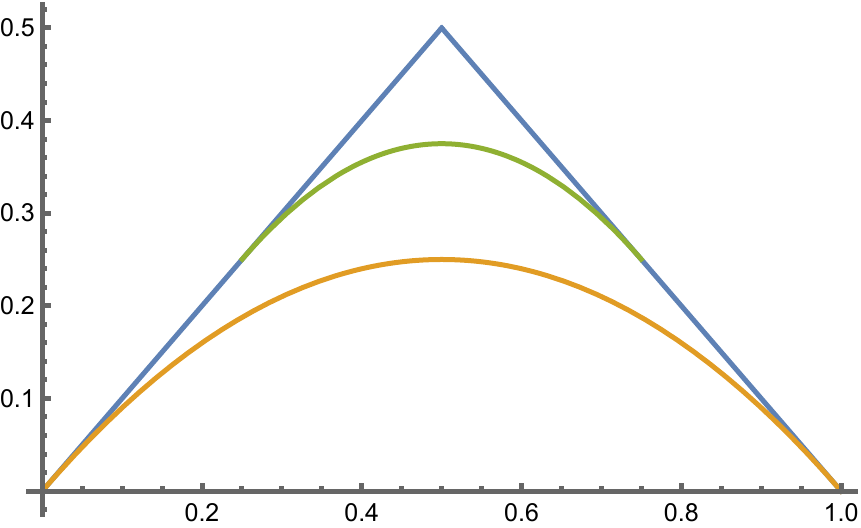}
\caption{The functions $y_n$ in Example~\ref{ex:noxistenceC1} have an energy converging to $0$, but their limit is not $C^1$.}
\label{fig:smooth}
\end{center}
\vskip-10pt
\end{figure}
See Figure~\ref{fig:smooth} for the graph of these functions.
Thus \[0\le F(y_n)=\displaystyle\int_{\frac12-\frac1n}^{ \frac12+\frac1n}(y_n'^2-1)^2\,dt\le \int_{\frac12-\frac1n}^{ \frac12+\frac1n}1\,dt=\dfrac2n\to 0\quad
\text{as}\quad n\to +\infty,\]
showing that $0$ is the infimum  of $F$ among the $C^1$ functions with the prescribed boundary conditions. Nevertheless, there is no $C^1$ function $y$ with $y(0)=y(1)=0$ such that $F(y)=0$, otherwise $y'^2=1$ on $I$ so that either $y'=1$ on $I$ (implying $y(1)=1$) or $y'=-1$ on $I$ (implying $y(1)=-1$).
However, the function $y_*$ is   a minimizer of $F$ among Lipschitz functions.
\end{example}
This example shows that
we need to enlarge the class of $C^1$ functions.
A natural choice would be to work with piecewise $C^1$ functions.
The previous example, which has no $C^1$ solution,
can be perfectly handled within this class.
This is not the case for the next example.
\begin{example}[\textbf{Mani\`a's example \cite{Mania}}]\label{ex:mania}
Consider the problem
\begin{equation}
\label{tag:mania}
\min F(y)=\int_0^1(y^3-t)^2y'^6\,dt:\quad y\in\AC(I),\, y(0)=0,\, y(1)=1\,.\end{equation}
The Lagrangian here is non--autonomous because
it involves the time variable $t$.
The function $y_*(t)=t^{1/3}$
satisfies the boundary conditions
and $F(y_*)=0$.
Its derivative,
$y_*'(t)\,=\,t^{-2/3}/3$,
is unbounded but nevertheless integrable on $[0,1]$.
Thus $y_*$ is the minimizer of $F$ among the absolutely continuous functions. Notice that if $F(y)=0$ then, necessarily, $y=y_*$, thus $F$ has no Lipschitz minimizers.
\end{example}
In any case, once a class of admissible functions $\mathcal E$ is chosen, the infimum $\inf_{\mathcal E}F$ of the integral
functional $F$ among the functions of $\mathcal E$ exists for sure, since $F$ is bounded from below by $0$.
A \textbf{minimizing sequence} for \eqref{tag:P}  is  a sequence $(y_n)_{n\geq 1}$ of functions in $\mathcal E$ with $y_n(a)=A, y_n(b)=B$ and such that $F(y_n)\to \inf_{\mathcal E} F$ as $n\to +\infty$. We say that \eqref{tag:P} has a minimizer or a solution
whenever there is $y_*$ in $\mathcal E$ with $y_*(a)=A, y_*(b)=B$ such that $F(y_*)=\inf_{\mathcal E}F$.
Two main problems of the Calculus of Variations are:
\smallskip

\noindent
{\bf Existence.} Does problem \eqref{tag:P} admit a solution in a suitable function space $\mathcal E$, possibly larger than $C^1$ functions, e.g., Lipschitz or absolutely continuous functions?
\smallskip

\noindent
{\bf Regularity.} Once a solution to \eqref{tag:P} exists, can one show that it actually belongs to a space of ``more'' regular functions?
\smallskip

\noindent
It turns out that a satisfactory class for finding minimizers are
the absolutely continuous functions $\AC(I)$
(see definition~\ref{absco}):
a celebrated result by Tonelli
(see, for instance, \cite[\S 3.2]{GBH})
establishes some sufficient criteria under which a minimizer of $F$ exists in that class. Thus, from now onwards, we
will work with $\mathcal E=\AC(I)$.
Concerning the regularity problem, many results,
starting from \cite{CVTrans}, give some conditions ensuring that the minimizer, whenever it exists,  is actually Lipschitz.
It may happen however that \eqref{tag:P} has no solutions, even among the absolutely continuous functions.
\begin{example}[An example with no solution in $\AC(I)$]\label{ex:sawtooth}
Consider the problem
\begin{equation}
\label{tag:sawtooth}
F(y)=\int_0^1\big((y'^2-1)^2\,+y^2\big)\,dt: \quad y(0)=y(1)=0\,.\end{equation}
For  $n\ge 2$, let $y_n$ be the 'sawtooth' function (see Figure \ref{fig:sawtooth}) defined by
\[y_n(0)=0,\quad y_n'(t)=\begin{cases}1&\text{ if } t\in\big[\frac{2k}{2n},\frac{2k+1}{2n}[\\
-1&\text{ if } t\in\big[\frac{2k+1}{2n},\frac{2k+2}{2n}[\end{cases}, \quad k=0,...,n-1\,.\]
Then $y_n'\in\{-1,1\}$ and $|y_n|\le 1/({2n})$; therefore
\[0\le F(y_n)=\int_0^1y_n^2\,dt\le \dfrac1{(2n)^2}\to 0
\quad\text{as}\quad n\to +\infty,\]
showing that $\inf F=0$ among the absolutely continuous functions with the given constraints.
However if $y$ is absolutely continuous and $F(y)=0$,
then necessarily $\smash{\int_0^1y^2(t)}\,dt=0$ implying that $y=0$,
so that $F(y)=\smash{\int_0^11^2\,dt}=1$,  which is absurd.
\begin{figure}[!ht]
\begin{center}
\includegraphics[width=0.85\textwidth]{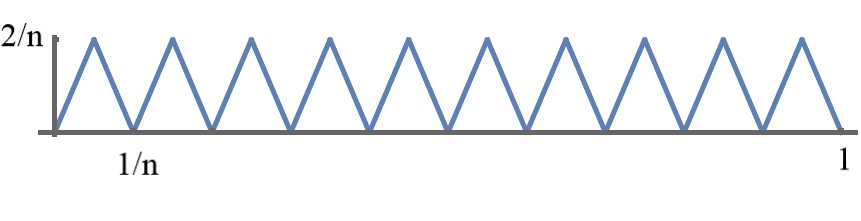}
\vskip-10pt
\caption{The 
functions $y_n$ for Example \ref{ex:sawtooth} have an energy converging to $0$, but their limit does not exist.}
\label{fig:sawtooth}
\end{center}
\vskip-20pt
\end{figure}
\end{example}
\section{Euler--Lagrange and Du Bois-Reymond equations.}
Like Fermat's rule for the derivative of a
function of one real variable, necessary conditions are helpful to find the candidates for being a minimum of \eqref{tag:P}.
 We denote by $L_t, L_y, L_v$ the partial derivatives of $L$ with respect to the  variables $t, y, v$.
\begin{theorem}[Necessary conditions]
\label{th:euler}
Assume that $y_*$ is an absolutely continuous minimizer of \eqref{tag:P}. Then
it satisfies the
\textbf{Euler--Lagrange equation}:
for almost every $t\in I$,
\begin{equation}
\label{eula}
L_y(t, y_*(t), y_*'(t))-\frac{d}{dt}L_v(t, y_*(t), y_*'(t))\,=\,0\,.\end{equation}
It satisfies also the
\textbf{Du Bois-Reymond equation}:
there is a constant $c$ such that, for almost every $t\in I$,
\begin{equation}\label{dure}
L(t, y_*(t), y_*'(t))-y'_*(t)L_v(t, y_*(t), y_*'(t))\,=\,c+\int_a^tL_t(s, y, y')\,ds
\,.\end{equation}
\end{theorem}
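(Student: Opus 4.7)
The plan is to derive both identities by perturbing $y_*$ and using the fact that the first-order variation of $F$ at a minimizer must vanish; the two equations correspond to two natural classes of admissible variations that preserve the boundary conditions.

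For the Euler--Lagrange equation I would take an arbitrary test function $\varphi\in C^\infty_c((a,b))$, so that $y_*+\varepsilon\varphi$ still satisfies $y(a)=A,\ y(b)=B$. Since $y_*$ minimizes $F$, the scalar function $\varepsilon\mapsto F(y_*+\varepsilon\varphi)$ has a minimum at $\varepsilon=0$, hence its derivative there vanishes. Interchanging derivative and integral (via dominated convergence, using $L\in C^1$ together with the fact that $\varphi$ and $\varphi'$ are bounded with compact support) yields the weak form
\begin{equation*}
\int_a^b\bigl[L_y(t,y_*,y_*')\,\varphi+L_v(t,y_*,y_*')\,\varphi'\bigr]\,dt\,=\,0.
\end{equation*}
Setting $M(t)=\int_a^t L_y(s,y_*(s),y_*'(s))\,ds$ and integrating the first term by parts (the boundary contributions vanish since $\varphi(a)=\varphi(b)=0$), the relation becomes $\int_a^b\bigl[L_v(t,y_*,y_*')-M(t)\bigr]\varphi'(t)\,dt=0$ for every admissible $\varphi$. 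The du Bois-Reymond lemma (if $\int g\varphi'\,dt=0$ for all $\varphi\in C^\infty_c$, then $g$ equals a constant almost everywhere) then forces $L_v(t,y_*,y_*')=M(t)+\textrm{const}$, which is absolutely continuous; differentiating this identity a.e.\ yields \eqref{eula}.

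For the Du Bois-Reymond equation the idea is to use a \emph{horizontal} variation (a reparametrization of time), instead of a vertical one. Given $\eta\in C^1_c((a,b))$, the map $\tau_\varepsilon(t)=t+\varepsilon\eta(t)$ is, for $|\varepsilon|$ small, an increasing $C^1$ bijection of $[a,b]$ onto itself that fixes the endpoints; hence $y_\varepsilon=y_*\circ\tau_\varepsilon^{-1}$ lies in $\AC(I)$ and meets the boundary conditions. Changing variables in the integral rewrites the energy as
\begin{equation*}
F(y_\varepsilon)\,=\,\int_a^b L\!\left(\tau_\varepsilon(t),\,y_*(t),\,\frac{y_*'(t)}{\tau_\varepsilon'(t)}\right)\tau_\varepsilon'(t)\,dt.
\end{equation*}
Differentiating under the integral sign at $\varepsilon=0$ (and again invoking the minimizing property of $y_*$) produces
\begin{equation*}
\int_a^b\Bigl[\bigl(L(t,y_*,y_*')-y_*'\,L_v(t,y_*,y_*')\bigr)\eta'(t)+L_t(t,y_*,y_*')\,\eta(t)\Bigr]\,dt\,=\,0.
\end{equation*}
Defining $N(t)=\int_a^t L_t(s,y_*,y_*')\,ds$, a further integration by parts on the $L_t\eta$ term and a second application of the du Bois-Reymond lemma give \eqref{dure}.

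The main obstacle in both steps is the justification of differentiation under the integral sign: since $y_*'$ need not be bounded, $L_v(\cdot,y_*,y_*')$ and $L_t(\cdot,y_*,y_*')$ may be only locally controlled, and dominating the difference quotient by a fixed integrable function requires either extra integrability hypotheses on the composition $L(\cdot,y_*,y_*')$ and its partial derivatives, or a truncation argument restricting attention to the sets $\{|y_*'|\le N\}$ and exploiting the arbitrariness of $\varphi$ and $\eta$. Once that technicality is handled the rest is bookkeeping, and the du Bois-Reymond lemma itself is proved by a routine mollification of a primitive.
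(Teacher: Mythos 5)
Your proposal is correct in outline but takes a genuinely different, and in fact more general, route than the paper. The paper proves both equations only in the special case where $L$ and $y_*$ are of class $C^2$: for \eqref{eula} it integrates by parts so as to put the derivative on $t\mapsto L_v(t,y_*,y_*')$, which presupposes that this composition is differentiable, and it obtains \eqref{dure} by directly differentiating $L-y_*'L_v$ along the minimizer with the chain rule and then inserting \eqref{eula}, which requires $y_*''$ to exist. You instead keep the first variation in weak form, integrate by parts onto a primitive of $L_y$ (respectively of $L_t$), and conclude with the du Bois-Reymond lemma; and you derive \eqref{dure} independently of \eqref{eula} by inner variations $y_*\circ\tau_\varepsilon^{-1}$, $\tau_\varepsilon(t)=t+\varepsilon\eta(t)$. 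Your computation of the first inner variation, namely $\int_a^b\bigl[(L-y_*'L_v)\eta'+L_t\,\eta\bigr]\,dt=0$, is correct, and it yields exactly the integrated identity \eqref{dure}. What your route buys is that it never assumes $y_*\in C^2$: both equations come out in their integrated (hence a.e.-differentiable) form, and the inner-variation device is precisely the time-reparametrization idea the paper reuses later in the proof of Theorem~\ref{th:CFM}. What it costs is the justification of differentiating under the integral sign when $y_*'$ is unbounded (and the local integrability of $L_v(\cdot,y_*,y_*')$ needed for the du Bois-Reymond lemma); you correctly identify this as the genuine obstruction, and the paper avoids it only through its $C^2$ assumption, under which $y_*'$ is bounded and dominated convergence is immediate. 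Under that same assumption your argument closes completely, so the sketch is sound and is the standard path toward the general statement.
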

\noindent
We shall prove these results in the special case where $L$ and $y_*$ are of class $C^2$.
However, Theorem~\ref{th:euler} does actually hold in a suitable generalized sense with no assumptions on $L$ (in particular without asking it to be differentiable,
see \cite{BM1, BM2}).
\begin{proof}
For the Euler--Lagrange equation,
we use an auxiliary function $\eta$ of class $C^2$
satisfying $\eta(a)=\eta(b)=0$ and we consider
	the energy
	of the perturbed function $y_*+\varepsilon\eta$ where
	 $\varepsilon$ is a small parameter.
	Since $y_*$ is a solution to $\eqref{tag:P}$,
	then the map $\varepsilon\mapsto F(y_*+\varepsilon\eta)$ has
	a local minimum at $\varepsilon=0$, therefore
\[\begin{aligned}
		0\,=\,\frac{d}{d\varepsilon}
		F(y_*+\varepsilon\eta)|_{\varepsilon=0}&\,=\,
\int_a^b
		\frac{d}{d\varepsilon}
L( t, y_*+\varepsilon\eta,
		y'_*+\varepsilon\eta'
)|_{\varepsilon=0}
\,dt\\
&\,=\,
\int_a^b
\eta L_y(t,  y_*, y'_*)\,+\,
\eta' L_v(t,  y_*, y'_*)\,dt\\
&\,=\,
\int_a^b
\eta
\Big(
L_y(t,  y_*, y'_*)\,-\,
		\frac{d}{dt}
L_v(t,  y_*, y'_*)\Big)\,dt\,,
\end{aligned}\]
where we performed an integration by parts in the last step
	and we used the fact that
$\eta(a)=\eta(b)=0$. This equality holds for any suitable function $\eta$,
and this yields the Euler--Lagrange equation~\eqref{eula}.
For the
Du Bois-Reymond equation,
we compute with the help of the chain rule the derivative
\begin{multline}
\dfrac{d}{dt}\big(L(t, y_*(t), y_*'(t))-y'_*(t)L_v(t, y_*(t), y_*'(t))\big)\\
\,=\,L_t(\cdot)+L_y(\cdot)y_*'+L_v(\cdot)y_*''-y_*'\dfrac{d}{dt}L_v(\cdot)-y_*''L_v(\cdot)\qquad (\cdot)=(t, y_*, y_*'))\\
\,=\,L_t(\cdot)+L_y(\cdot)y_*'-y_*'\dfrac{d}{dt}L_v(\cdot)=L_t(t, y_*, y_*'),
\end{multline}
where we have used
the Euler--Lagrange equation~\eqref{eula} in the last step.
\end{proof}
\noindent
The Du Bois-Reymond equation is often useful in order to obtain an explicit expression of the minimizers of \eqref{tag:P}, whenever  they exist.
Let us consider our introductory example~\eqref{ex1}, the problem of
the minimal rotation surface, where $L(y,v)=2\pi y\sqrt{1+v^2}$. The Du Bois-Reymond equation reads
\[y_*\sqrt{1+y_*'^2}-y_*\dfrac{y_*'}{\sqrt{1+y_*'^2}}=c\in\R,\]
which is equivalent to
$y_*^2(1+y_*'^2)\,=\,c^2$.
This equation can be integrated and gives the family of \textbf{ catenaries} (whose name arises from chain), 
\begin{equation}\label{tag:solmin}y_*(t)
\,=\,\smash{\dfrac{1}{\alpha}}\cosh(\alpha t+\beta)\,,\end{equation}
for some constants $\alpha, \beta$ determined by the boundary conditions (see Figure \ref{fig:catenary}). Its rotation around the $x$ axis yields a surface called a \textbf{catenoid}.
A more thorough analysis shows actually that:
\smallskip

\noindent
$\bullet$ If $b-a$ is relatively small with respect to $A$ and $B$ then the solution of the minimal surface problem is of the form given in \eqref{tag:solmin}.
\smallskip

\noindent
$\bullet$ If $b-a$ is sufficiently large then the problem does not have a solution: the infimum in this case is given by $\pi(A^2+B^2)$, which represents the area of the degenerate surface consisting of the two disks of radii $A$ and $B$.
\begin{figure}[!ht]
\begin{center}
\includegraphics[width=0.9\textwidth]{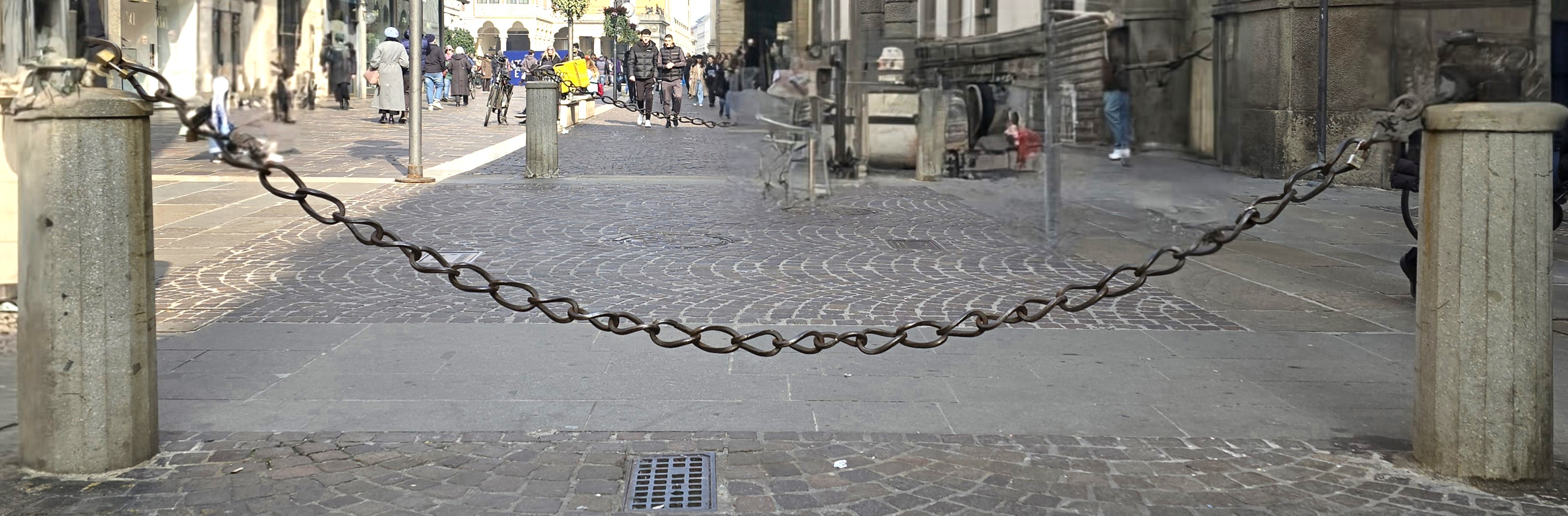}
\caption{{\small A catenary, close to the Bo' (the old building of the University of Padova).}}
\label{fig:catenary}
\end{center}
\vskip-10pt
\end{figure}
\section{The Lavrentiev phenomenon.}\label{sect:cerfm}
Besides the problem of finding a minimizer to $\eqref{tag:P}$,
an important issue is to find the infimum of $F$.
For that goal, one generally relies on the methods of numerical analysis.
However,
these methods work when there is at least a
sequence $(y_n)_{n\geq 1}$  of Lipschitz functions whose values  $F(y_n)$ converge to $\inf F$, i.e., if there is a Lipschitz minimizing sequence  for \eqref{tag:P}.  This occurs, for instance,  in Example \ref{ex:sawtooth} since each function $y_n$ in the  minimizing sequence built there is Lipschitz. Unfortunately this is not always possible.
Although every absolutely continuous function can be approximated
via Lipschitz functions, there are problems that do not admit Lipschitz minimizing sequences.
This truly unpleasant fact was first realized by Lavrentiev in 1926, it involved a very smooth Lagrangian (a polynomial!).
\begin{example}[Mani\`a's example, sequel]\label{ex:mania2}
We come back to
Mani\`a's example \cite{Mania} considered in Example \ref{ex:mania}, which has become a
classical example in the literature on
the Lavrentiev phenomenon.
The functional described in Example~\ref{ex:mania} exhibits
the \textbf{Lavrentiev phenomenon}.
More precisely, there is $\eta>0$ such that, for every Lipschitz function $y$ satisfying the boundary conditions,
\[F(y)\ge \eta>0=F(y_*)=\inf F.\]
This can be shown with the help of
some elementary, though non trivial, computations (see \cite[\S 4.3]{GBH}).
Another unexpected fact appears here: the situation changes drastically if one takes into account just the final end point condition. Indeed it turns out that the
sequence $(y_n)_{n\geq 1}$, where each $y_n$ is obtained by truncating $y_*$ at $1/n$ (Figure \ref{fig: sequenceyn}):
\[y_n(s)\,=\,\begin{cases}{1}/{(n+1)^{1/3}}&\text{ if } s\in [0, \frac1{n+1}],\\
s^{1/3}&\text{ otherwise}, \end{cases}\]
\begin{figure}[!ht]
\begin{center}
\includegraphics[width=0.7\textwidth]{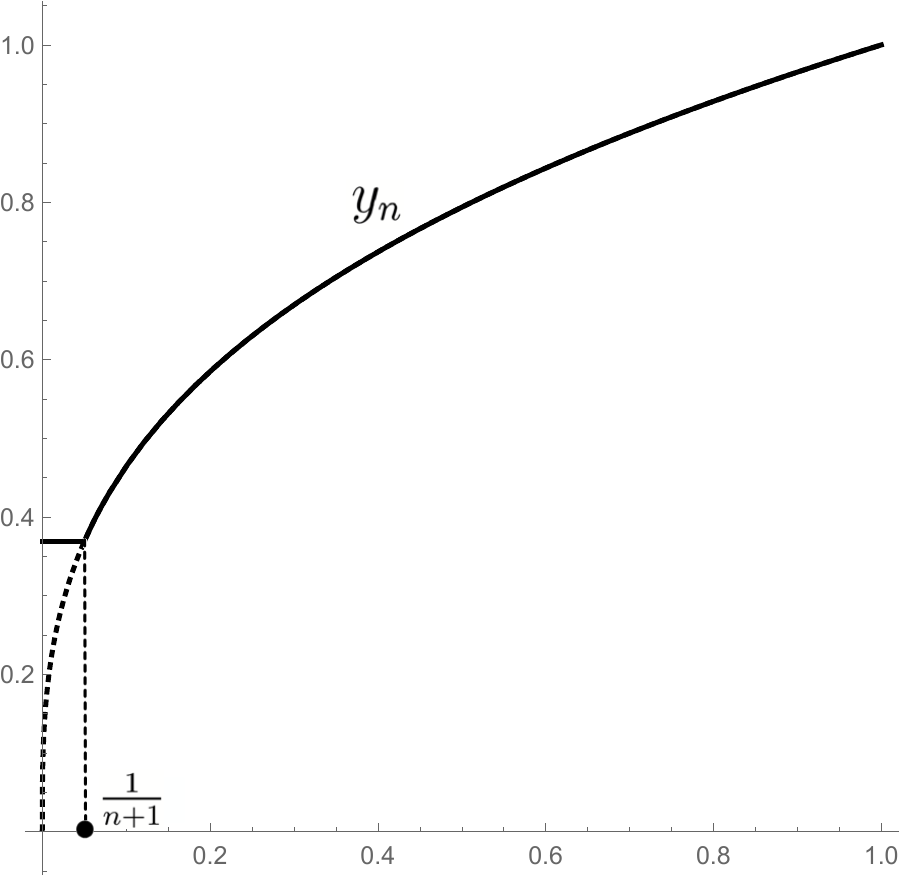}
\caption{{\small The functions $y_n$ for Example~\ref{ex:mania2}
satisfy
$y_n(1)=1$ and $F(y_n)\to F(y_*)$ as $n\to+\infty$.}}
\label{fig: sequenceyn}
\end{center}
\vskip-20pt
\end{figure}
\noindent
is a sequence of Lipschitz functions
satisfying
$y_n(1)=1$ and $F(y_n)\to F(y_*)$ as $n\to+\infty$.
Therefore, unlike the initial problem with
two boundary constraints, no Lavrentiev phenomenon occurs  for the  problem with just the final constraint $y(1)=1$:
\begin{equation}\label{mania2}
\min\, F(y)=\int_0^1(y^3-s)^2(y')^6\,ds:\, y\in \AC([0,1]),\, \, y(1)=1\,.\end{equation}
\end{example}
\begin{example}[A new elementary example]
The next example exhibits the Lavrentiev phenomenon
for the problem
with just one end-point constraint.
The Lagrangian, though, is not as smooth as Mani\`a's  but the method that we use is simpler.
Moreover, the Lagrangian $L$ is autonomous
(whereas
Mani\`a's example is non--autonomous).
This example is a further simplification of  the one
in~\cite[\S 3]{CerfM}.
We consider the one end-point constraint problem
\begin{equation}\label{tag:O}
	\label{E}\min\, F(y)=\,\int_0^1\Big(y'-\frac{1}{2y}\Big)^2\,dt,\qquad  y\in\AC([0,1]),\,\,\, y(0)=0.\end{equation}
To be honest, we are considering here a Lagrangian that, differently from those considered above,  also takes the value $+\infty$:
\[L(y,v)=\begin{cases}\Big(v-\dfrac1{2y}\Big)^2&\text{ if } y\not=0,\\
+\infty&\text{ otherwise}.\end{cases}\]
\end{example}
\noindent
The next theorem
demonstrates that the Lavrentiev phenomenon
occurs for the problem \eqref{tag:O}.
Notice that \eqref{tag:O} has  a minimum, since $F(\sqrt t)=0$.
\begin{theorem}
The energy $F(y)$ is infinite for any
Lipschitz function $y$.
\end{theorem}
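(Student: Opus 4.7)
My plan is to exploit the change of variables $u=y^{2}$. A direct computation gives the identity
\[
\Big(y'-\frac{1}{2y}\Big)^{2}\,=\,\frac{(2yy'-1)^{2}}{4y^{2}}\,=\,\frac{(u'-1)^{2}}{4u}\,,
\]
so that $F(y)=\int_{0}^{1}(u'(t)-1)^{2}/(4u(t))\,dt$. The point of this rewriting is that the integrand now has the shape (derivative minus a constant)$^{2}$ divided by a positive weight, which is the natural setup for a Cauchy--Schwarz lower bound.

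Next I would dispose of the trivial case: if $y$ vanishes on a set of positive measure, then $L(y,y')=+\infty$ on that set and $F(y)=+\infty$ by inspection of the original Lagrangian. So I may assume $y\neq 0$ almost everywhere, which ensures $u>0$ a.e.\ and makes $(u'-1)^{2}/(4u)$ a well-defined, nonnegative, measurable integrand. I would also note that if $y$ is $M$--Lipschitz with $y(0)=0$, then $|y(t)|\leq Mt$, so $u(t)\leq M^{2}t^{2}$, and $u=y^{2}$ is itself Lipschitz on $[0,1]$ with $u(0)=0$.

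The core estimate is Cauchy--Schwarz on $[0,\delta]$:
\[
\int_{0}^{\delta}\frac{(u'-1)^{2}}{4u}\,dt\ \cdot\ \int_{0}^{\delta}4u\,dt\ \geq\ \Big(\int_{0}^{\delta}(u'(t)-1)\,dt\Big)^{\!2}\,=\,(u(\delta)-\delta)^{2}.
\]
For $\delta<1/(2M^{2})$ one has $u(\delta)\leq M^{2}\delta^{2}<\delta/2$, so $(u(\delta)-\delta)^{2}\geq \delta^{2}/4$; meanwhile $\int_{0}^{\delta}4u\,dt\leq 4M^{2}\delta^{3}/3$. Dividing gives
\[
F(y)\ \geq\ \int_{0}^{\delta}\frac{(u'-1)^{2}}{4u}\,dt\ \geq\ \frac{3}{16M^{2}\delta}\,,
\]
and letting $\delta\to 0^{+}$ forces $F(y)=+\infty$.

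There is no real obstacle here, since the only genuine idea is the substitution $u=y^{2}$; once it is made, the Lipschitz constraint $u\leq M^{2}t^{2}$ together with the forced mismatch $u(\delta)-\delta\sim -\delta$ near the origin produces the divergence automatically via one Cauchy--Schwarz inequality.
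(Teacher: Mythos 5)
Your proof is correct, and it takes a genuinely different route from the paper's. The paper expands the square, keeps only the cross term and the $1/(4y^2)$ term, recognizes $y'/y$ as the derivative of $\ln|y|$, bounds $1/y^2\ge (y'/y)^2/C^2$ using the Lipschitz constant, and then applies Cauchy--Schwarz to obtain a quadratic expression in $\ln|y(b)|-\ln|y(c)|$ that blows up as $c$ approaches a zero of $y$; the divergence of the logarithm is the engine of the argument. You instead substitute $u=y^2$, which converts the Lagrangian into the form $(u'-1)^2/(4u)$ (the same structure as Example~\ref{ex:yy} but with the singular weight $1/(4u)$ that creates the phenomenon), and then a single weighted Cauchy--Schwarz on $[0,\delta]$ plays off the forced mismatch $u(\delta)-\delta\approx-\delta$ against the Lipschitz-imposed bound $\int_0^\delta 4u\,dt\le 4M^2\delta^3/3$. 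Your version is arguably more transparent and quantitative: it yields the explicit lower bound $F(y)\ge 3/(16M^2\delta)$ for all small $\delta$, and it isolates the real mechanism, namely that $u=y^2$ must grow linearly to keep the energy finite but can only grow quadratically when $y$ is Lipschitz with $y(0)=0$. The paper's version, by working on an arbitrary maximal interval to the right of a zero of $y$, avoids the preliminary reduction you make (dismissing the case where $y$ vanishes on a set of positive measure via $L(0,v)=+\infty$), but your reduction is legitimate and the two cases together cover all Lipschitz $y$ with $y(0)=0$. All the technical points you rely on are sound: $u=y^2$ is Lipschitz with $u'=2yy'$ a.e., $u(0)=0$ gives $\int_0^\delta(u'-1)\,dt=u(\delta)-\delta$, and the weighted Cauchy--Schwarz is valid since $u>0$ a.e.\ guarantees $\int_0^\delta 4u\,dt>0$ while the inequality holds trivially if the energy integral is already infinite.
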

\begin{proof}
Let $y$ be a Lipschitz function from $[0,1]$ to $\R$ such that $y(0)=0$.
If $y$ is identically equal to $0$, then its energy $F(y)$ is infinite.
Otherwise, the set of its zeroes,
$$\{\,t\in[0,1]: y(t)=0\,\}\,,$$
is a closed subset of $[0,1]$, its complement is an open
set which can be written as a countable union of disjoint intervals.
Because $y(0)=0$, there exists at least one interval $(a,b)$ included
in $[0,1]$ such that
$y(a)=0$ and $y(t)\neq 0$ for all $t\in [a,b]$.
Since $y$ is Lipschitz, there exists a constant $C$ such that
$$\forall s,t\in[0,1]\qquad|y(t)-y(s)|\,\leq\,C|t-s|\,.$$
Moreover $y$, being Lipschitz, is differentiable almost everywhere and
 its derivative is bounded by the constant $C$.
Let $c\in (a,b)$.
We have
\begin{equation}\label{uui}
F(y)\,\ge\,
\int_{c}^b\left(y'^2-\dfrac{y'}{y}
+\dfrac{1}{4y^2}\right)\,dt
\,\ge\,
-\int_{c}^b\dfrac{y'}{y}\,dt
+
\frac{1}{4}
\int_{c}^b
\dfrac{dt}{y^2}
\,.
\end{equation}
On one hand, we have
\begin{equation}
	\label{oneh}
\int_{c}^b\dfrac{y'(t)}{y(t)}\,dt\,=\,
\ln|y(b)|-\ln|y(c)|\,.
\end{equation}
On the other hand, we have
\begin{equation}
	\label{otheh}
\int_{c}^b
\dfrac{dt}{y(t)^2}\,\geq\,
\frac{1}{C^2}
\int_{c}^b
\Big(\dfrac{y'(t)}{y(t)}\Big)^2\,dt
\,.
\end{equation}
The classical Cauchy--Schwarz inequality yields that
\begin{equation}\label{dtuhi}
\left(\int_c^b
\dfrac{y'(t)}{y(t)}\,dt\right)^2\,\leq\,
(b-c)	\int_{c}^b
\Big(\dfrac{y'(t)}{y(t)}\Big)^2\,dt
\,,
\end{equation}
whence
\begin{equation}\label{tuhi}
	\int_{c}^b
	\Big(\dfrac{y'(t)}{y(t)}\Big)^2\,dt
\,\ge\,
					       \dfrac1{b-c}\Big(
\ln|y(b)|-\ln|y(c)|
\Big)^2\,.
\end{equation}
Substituting the inequalities~\eqref{oneh}, \eqref{otheh} and~\eqref{tuhi}
in inequality~\eqref{uui},
we get
\begin{equation}
	\label{ult}
F(y)\,\ge\,
-\ln|y(b)|+\ln|y(c)|
+
\dfrac1{4C^2(b-c)}\Big(
\ln|y(b)|-\ln|y(c)|
\Big)^2\,.
\end{equation}
Since
$y(a)=0$ and $y$ is continuous at $a$, then $y(c)\to 0$ as $c\to a$, so that
$$\lim_{c\to a}\,\ln|y(c)|\,=\,-\infty\,.$$
Taking the limit in inequality~\eqref{ult}
as $c$ goes to $a$, we conclude that
$F(y)=+\infty$.
\end{proof}
\section{Avoidance of the Lavrentiev phenomenon.}
For\-tu\-na\-te\-ly,
the La\-vren\-tiev phenomenon does not occur
for an autonomous Lagrangian $L(y,v)$
under a very weak condition that is satisfied by any continuous Lagrangian.
\begin{theorem}\label{th:ASCCM} Let $L:\R\times\R\to [0, +\infty)$ be \textbf{bounded on bounded sets}. Then the Lavrentiev phenomenon does not occur for \eqref{tag:P}.
\end{theorem}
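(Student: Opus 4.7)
The plan is, given any $y\in\AC([a,b])$ satisfying the boundary conditions with $F(y)<+\infty$, to construct a sequence of Lipschitz functions with the same boundary values whose energies converge to $F(y)$. Since no convexity of $L(y,\cdot)$ is assumed, no Jensen-type smoothing is available; the strategy is a careful time-reparametrization that caps the derivative of $y$, followed by a delicate refit onto $[a,b]$ preserving both endpoints simultaneously.

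Fix $\eps>0$. Since $y'\in L^1$ and $L(y,y')\in L^1$, absolute continuity of Lebesgue integrals provides a threshold $M$ so large that the fast set $E_M=\{t\in[a,b]:|y'(t)|>M\}$ has measure below $\eps$, and both $D_M:=\int_{E_M}|y'|\,dt$ and $\int_{E_M}L(y,y')\,dt$ are below $\eps$. Let $R=\max_{[a,b]}|y|$ and $K(M,R)=\sup\{L(y,v):|y|\le R,\,|v|\le M\}<+\infty$ by the bounded-on-bounded-sets hypothesis. I introduce the stretched time
\[
\sigma_M(t)\,=\,t+\int_a^t\max\!\big(0,|y'(s)|/M-1\big)\,ds,
\]
which maps $[a,b]$ onto $[a,a+T_M]$ with $T_M=(b-a)+\Delta_M$ and $\Delta_M\le D_M/M$. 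Setting $\tilde y_M=y\circ\sigma_M^{-1}$, a chain-rule computation gives $|\tilde y_M'|\le M$ almost everywhere, and a change of variable yields
\[
\int_a^{a+T_M}\!L(\tilde y_M,\tilde y_M')\,ds\,\le\,F(y)+K(M,R)\,\frac{D_M}{M}.
\]

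The crux is refitting $\tilde y_M$ from $[a,a+T_M]$ back to $[a,b]$ with both $z_M(a)=A$ and $z_M(b)=B$. Trimming the tail ruins $B$; uniform linear compression rescales the derivative by a factor greater than one and, without continuity of $L(y,\cdot)$, cannot be absorbed. The idea is instead to locate a short slow interval $J\subset[a,a+T_M]$ of length just above $\Delta_M$ on which $|\tilde y_M'|$ remains bounded by a constant $M_0$ independent of $M$: such $J$ exists for $M_0$ chosen so that $\{|y'|\le M_0\}$ has positive measure, via a Lebesgue-density argument (noting that $\sigma_M$ is the identity on $\{|y'|\le M_0\}\subset E_M^c$). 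Compressing $\tilde y_M$ on $J$ to absorb the excess $\Delta_M$ multiplies the speed there by a factor $c\le 2$ and, by the bounded-on-bounded-sets hypothesis, the extra energy contribution is at most $2K(2M_0,R)\,\Delta_M$, which vanishes since $M_0$ is fixed. Outside $J$ the function and its energy are unchanged, so the final $z_M$ is Lipschitz on $[a,b]$, meets the boundary conditions, and satisfies $F(z_M)\le F(y)+K(M,R)\,D_M/M+2K(2M_0,R)\,\Delta_M$.

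I expect the hardest point to be the vanishing of the first error $K(M,R)\,D_M/M$ as $M\to+\infty$: this requires the finiteness of $F(y)$ to force $D_M$ to shrink faster than $K(M,R)$ can grow, a delicate interplay that lies at the heart of the argument and must be established by a careful rearrangement of $\int_{E_M}L(y,y')\,dt$ using the autonomous structure. The two-endpoint constraint is the whole source of technical difficulty: in the one-endpoint case of Alberti--Serra Cassano one simply truncates the stretched interval at $b-a$ and discards the tail, while the two-endpoint case requires the slow-interval absorption, which exploits essentially the autonomous character of $L$---positions and velocities alone determine it, so translating and locally compressing a slow piece costs nothing depending on absolute time. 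Taking $\eps_n\to 0$ and applying the construction to a minimizing sequence in $\AC$ then yields a Lipschitz sequence with the same infimum, which is precisely the absence of the Lavrentiev phenomenon.
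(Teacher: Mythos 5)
Your overall architecture --- slow down the fast set by a time reparametrization, then absorb the resulting time excess on a slow piece so that both endpoints are preserved --- is exactly the skeleton of the proof in the paper. But note that the paper does \emph{not} prove Theorem~\ref{th:ASCCM} in this generality: it proves only Theorem~\ref{th:CFM}, which adds the hypotheses that $L$ is $C^1$ and convex in $v$, and convexity is used precisely at the point you yourself flag as ``the hardest point.'' Your bound for the energy of the slowed-down fast part is $K(M,R)\,D_M/M$ with $K(M,R)=\sup\{L(y,v):|y|\le R,\ |v|\le M\}$, and there is no reason for this to vanish as $M\to+\infty$: boundedness on bounded sets allows $K(M,R)$ to grow arbitrarily fast in $M$, while $D_M=\int_{E_M}|y'|\,dt$ decays only at the rate dictated by the integrability of $y'$. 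Finiteness of $F(y)$ gives no leverage here, because the quantity to control is $L(y(t),\pm M)$ for $t\in E_M$, i.e.\ the value of $L$ at speeds the original trajectory does not attain at those times; the smallness of $\int_{E_M}L(y,y')\,dt$ says nothing about it. (For instance, if $L(y,v)=h(v)$ with $h\ge 0$ continuous, vanishing on the essential range of $y'$ --- which may be a sparse unbounded set --- but enormous at the capping speeds $\pm M$, then $F(y)=0$ while your reparametrized curve pays $h(M)\,D_M/M$ on the image of $E_M$.) The ``careful rearrangement using the autonomous structure'' you invoke to close this gap is a hope, not an argument. In the paper's convex case this is Step~7: convexity yields $L(y,y'/\varphi_k')\varphi_k'\le L(y,y')+P(\pm k)(\varphi_k'-1)$ with $P(w)=L(y,w)-wL_v(y,w)$ monotone away from $0$, so that $P(\pm k)\le P(\pm 1)$ is bounded by a constant independent of $k$ --- exactly the uniform bound your $K(M,R)$ fails to provide.

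The standard repair in the non-convex case (Alberti--Serra Cassano, and \cite{CM5} for two endpoints) is to slow the fast excursions down to a \emph{fixed} speed $\lambda$ chosen once and for all, so that the energy cost is $K(\lambda,R)\,D_M/\lambda\to 0$ with $K(\lambda,R)$ a fixed constant, at the price of a larger --- but still vanishing --- time excess of order $D_M/\lambda$ to be reabsorbed elsewhere. Your endpoint-refitting step is essentially the paper's construction of the set $A_k\subseteq\Omega_\lambda$ and is sound in spirit, with one correction: you should compress on a positive-measure \emph{subset} of $\{|y'|\le M_0\}$ rather than on a ``slow interval $J$,'' since a set of positive measure need not contain any interval on which $|\tilde y_M'|\le M_0$, and compressing a whole interval containing fast points would multiply those large speeds by your factor $c$ as well.
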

\noindent
Theorem~\ref{th:ASCCM} was formulated in \cite{ASC}, proven there for problems with one end-point constraint $y(a)=A$, and proven in \cite{CM5} in the general case with two end-point conditions $y(a)=A, y(b)=B$.
As Example~\ref{ex:mania2} shows, the two problems may behave quite differently with respect to the Lavrentiev phenomenon.
The reader may be puzzled by the fact that the Lagrangian considered in Mani\`a's Example~\eqref{ex:mania}, a polynomial,
is bounded on bounded sets. The fact there, is that the Lagrangian does not depend only on $y$ and $v$ but also on the independent variable $t$.
We prove here a particular case of Theorem~\ref{th:ASCCM}, assuming in addition that $L$ is  of class $C^1$
and that $v\mapsto L(y, v)$ is convex.
Recall that a $C^1$ function $\ell: \R\to \R$  is {convex} if its derivative is non--decreasing,~i.e.,
\[\forall v_1, v_2\in \R\qquad v_1\le v_2\quad\Longrightarrow
\quad\ell'(v_1)\le \ell'(v_2)\,.\]
We state next the main result that we shall prove.
\begin{theorem}
	\label{th:CFM}Assume that $L:\R\times\R\to [0, +\infty)$ is of class $C^1$ and that $v\mapsto L(y,v)$ is \textbf{convex} for every $y\in\R$.
Then the Lavrentiev phenomenon does not occur for \eqref{tag:P}.
\end{theorem}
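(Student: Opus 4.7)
The plan is to show that for every $y\in\AC([a,b])$ with $y(a)=A$, $y(b)=B$ and $F(y)<+\infty$, one can construct a family of Lipschitz functions $y_\varepsilon$ (with the same boundary values) satisfying $F(y_\varepsilon)\to F(y)$ as $\varepsilon\to 0^+$. This is enough: starting from any minimizing sequence in $\AC$, we could then replace each element by a Lipschitz approximant with energy arbitrarily close, forcing $\inf_{\mathrm{Lip}}F=\inf_{\AC}F$.

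The mechanism I would use is a reparametrization that dilates time where $|y'|$ is large, followed by an affine rescaling. Let $C:=\int_a^b|y'|\,dt<+\infty$ and define $\phi_\varepsilon:[a,b]\to[a,b+\varepsilon C]$ by $\phi_\varepsilon(t)=t+\varepsilon\int_a^t|y'(s)|\,ds$. Since $\phi_\varepsilon'=1+\varepsilon|y'|\ge 1$, its inverse $\sigma_\varepsilon$ is Lipschitz, and $w_\varepsilon:=y\circ\sigma_\varepsilon$ inherits from the chain rule the bound $|w_\varepsilon'|=|y'|/(1+\varepsilon|y'|)\le 1/\varepsilon$; so $w_\varepsilon$ is Lipschitz with the prescribed endpoint values, but lives on the ``wrong'' interval $[a,b+\varepsilon C]$. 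Setting $r:=1+\varepsilon C/(b-a)$ and rescaling affinely, $y_\varepsilon(t):=w_\varepsilon(a+r(t-a))$ becomes a Lipschitz function on $[a,b]$ with $y_\varepsilon(a)=A$, $y_\varepsilon(b)=B$. Two successive changes of variable would then give
$$F(y_\varepsilon)=\frac{1}{r}\int_a^b L\!\left(y(t),\frac{r\,y'(t)}{1+\varepsilon|y'(t)|}\right)(1+\varepsilon|y'(t)|)\,dt.$$

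Continuity of $L$ yields pointwise a.e.\ convergence of the integrand to $L(y(t),y'(t))$, and $r\to 1$, so the proof reduces to producing an $\varepsilon$-independent integrable dominant. This is where I expect the main obstacle: because $r>1$, the coefficient $\beta_\varepsilon:=r/(1+\varepsilon|y'|)$ of $y'$ inside $L$ exceeds $1$ on the set $\{|y'|<C/(b-a)\}$, which prevents reading $\beta_\varepsilon y'$ as a convex combination of $y'$ and $0$ everywhere. The remedy is to split $[a,b]$ along the threshold $|y'|=C/(b-a)$ (which is exactly the sign-changing locus of $1+\varepsilon|y'|-r$). On $\{|y'|\ge C/(b-a)\}$ one has $\beta_\varepsilon\le 1$, and convexity of $L(y,\cdot)$ applied to the convex combination $\beta_\varepsilon y'+(1-\beta_\varepsilon)\cdot 0$ gives
$$L(y,\beta_\varepsilon y')(1+\varepsilon|y'|)\le r\,L(y,y')+(1+\varepsilon|y'|-r)L(y,0)\le 2L(y,y')+\varepsilon|y'|L(y,0);$$
on the complementary set the arguments of $L$ stay in a fixed bounded subset of $\R\times\R$ independent of $\varepsilon$, and $(1+\varepsilon|y'|)\le r\le 2$ for $\varepsilon$ small. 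Since $y([a,b])$ is compact and $L$ is bounded on bounded sets (a consequence of $C^1$-regularity), $L(y(t),0)$ and the contribution on the ``tame'' set are uniformly bounded, so combining the two bounds gives an $\varepsilon$-independent $L^1$-dominant of the form $2L(y,y')+|y'|\cdot\mathrm{const}+\mathrm{const}$. Dominated convergence and $r\to 1$ then force $F(y_\varepsilon)\to F(y)$, which is the desired non-occurrence of the phenomenon.
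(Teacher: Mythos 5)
Your construction is correct, and it reaches the conclusion by a genuinely different route from the paper. The paper also reparametrizes time, but it slows the clock only on the bad set $S_k=\{|y'|\ge k\}$ (by the factor $k/|y'|$) and restores the endpoint condition $y_k(b)=B$ by accelerating on a separate, carefully measured set $A_k\subseteq\{|y'|\le\lambda\}$; the energy excess on $S_k$ is then controlled through Lemma~\ref{prop:derivative}, i.e.\ the convexity of $\mu\mapsto \ell(v/\mu)\mu$ and the monotonicity of $P(w)=\ell(w)-w\ell'(w)$, which is where the $C^1$ hypothesis enters. You instead slow the clock everywhere by the graded factor $1+\eps|y'|$ and restore the endpoint by a single affine compression of ratio $r=1+\eps C/(b-a)$; the price is that the velocity gets amplified by a factor $\beta_\eps>1$ on $\{|y'|<C/(b-a)\}$, and you correctly isolate this as the only obstacle and dispose of it by splitting at the threshold $|y'|=C/(b-a)$, using convexity in the elementary form $L(y,\beta v)\le \beta L(y,v)+(1-\beta)L(y,0)$ on the set where $\beta_\eps\le 1$ and plain boundedness of $L$ on a fixed compact set on the complement. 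Your change-of-variables identity for $F(y_\eps)$ and the dominant $2L(y,y')+\mathrm{const}\cdot|y'|+\mathrm{const}$ both check out. Two remarks. First, your argument never differentiates $L$: it uses only continuity and convexity in $v$, so it actually proves the theorem under weaker hypotheses than the paper's $C^1$ assumption (the paper's tangent-line functional $P$ requires $L_v$). Second, you silently invoke the chain rule $\big(y\circ\sigma_\eps\big)'=y'(\sigma_\eps)\,\sigma_\eps'$ for an $\AC$ function composed with a bi-Lipschitz increasing map, and the substitution formula for the resulting integrals; these are true but not free, and the paper cites \cite{Natanson} and \cite{Serrin} precisely for them, so you should do the same rather than treat them as obvious.
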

\noindent
The proof is self--contained, it is a quite simplified version of the one given in \cite{CFM}, where the authors do not assume the $C^1$ regularity of the Lagrangian.
Let us outline the strategy of the proof.
As is quite common in the proofs concerning the Lavrentiev phenomenon, we will actually show more than what is claimed.
More precisely, we will show that, for any $y$ in $\AC(I)$
satisfying the boundary conditions
and such that $F(y)<+\infty$,
there is a sequence of Lipschitz functions $(y_k)_{k\geq 1}$
satisfying the boundary conditions and such that
$F(y_k)\le F(y)+1/{k}$ for any $k\geq 1$.
Thus, given any minimizing sequence for \eqref{tag:P} in $\AC(I)$,
we obtain a minimizing sequence for \eqref{tag:P} consisting
of Lipschitz functions.
Indeed, let $(y_m)_{m\geq 1}$ be any minimizing sequence for \eqref{tag:P} and, for each $m\geq 1$, choose a Lipschitz function $\overline y_m$ satisfying the boundary conditions and such that $F(\overline y_m)\le F(y_m)+1/{m}$. Then, for each $m\geq 1$,
\[\inf F\le F(\overline y_m)\le F(y_m)+\dfrac1{m}
\to \inf F\quad\text{as}\quad m\to +\infty\,,\]
thereby
proving that the Lavrentiev phenomenon does not occur.
Now,
given $y$ in $\AC(I)$
satisfying the boundary conditions
such that $F(y)<+\infty$, how can we build the sequence
$(y_k)_{k\geq 1}$
of Lipschitz functions approximating $y$?

The trick consists in making a judicious change of time. We build
a sequence of bijective functions $\varphi_k: I\to I$ and we set
$y_k(t)=y(\varphi_k^{-1}(t))$ (steps 1 and 2 of the proof).
The change of time $\varphi_k$ is designed
so that the resulting function $y_k$ is Lipschitz. So, on
the set $S_k$ where the derivative $y'$ has a modulus larger than $k$, we slow down the time by a factor $k/|y'|$.
This would do
the job if there was only one constraint at the beginning of the interval. Now, in order to ensure that $y_k$ still satisfies
the constraint at the end of the interval, we have to accelerate
the time on another subset $A_k$ of $I$.
The total acceleration has to be finely tuned so that $y_k(b)=B$ (step 3 of the proof),
and simultaneously
the acceleration factor
must remain bounded so that the
function $y_k$ is Lipschitz
(step 4 of the proof).
In addition,
the total modification induced by the time change has to
be controlled so that $F(y_k)$ stays close to $F(y)$
(steps 5, 6 and 7 of the proof).
The heart of the proof, inspired by \cite{CM8}, consists in the construction of
the two sets $A_k$ and $S_k$ satisfying these requirements.
\subsection{Proof of Theorem \ref{th:CFM}.}
Fix $y$ in $\AC(I)$ such that $F(y)<+\infty$.
For $E$ a measurable set in $\R$, we denote by $|E|$ its Lebesgue measure.
We divide the proof into seven steps.\\
Step 1: {\em The set $S_k$ and choice of $\lambda$.}
For every integer $k\ge 1$, we set
\[S_k\,=\,\{\,t\in I:\, |y'(t)|\ge k\,\}\,.\]
Since $|y'|\ge k$ on $S_k$, we have
\begin{equation}\label{fili}|S_k|
\,\le\, \dfrac1k\int_{S_k}|y'(t)|\,dt\,\le\, \dfrac1k\int_I|y'(t)|\,dt\,\to\, 0\quad
\text{as}\quad k\to +\infty\,.\end{equation}
We also choose an arbitrary value $\lambda$ such that
the set
$\Omega_\lambda\,=\,
	\big\{\,t\in I:\, |y'(t)|\le \lambda\,\big\}$
has positive Lebesgue measure.
This is possible since
$I\,=\,\bigcup_{n=0}^{\infty}\Omega_n$:
if all the sets in this union were negligible,
then $I$ itself would be negligible.
\smallskip

\noindent
Step 2: {\em The change of variable $\varphi_k$.} We define the function $v_k:I\to \R$ by setting
    \[v_k(t)=\begin{cases}{|y'(t)|}/{k}&\text{ if }t\in S_k,\\
    1/2&\text{ if } t\in A_k,\\
    1&\text{ otherwise,}\end{cases}\]
    where $A_k\subseteq {I\setminus }S_k$ is a suitable subset of
    $I$ where $|y'|\le \lambda$,
    which  will be defined below.
    We define
a function $\varphi_k$ on $I$ by
\[\forall t\in I\qquad \varphi_k(t)\,=\,a+\int_a^tv_k(\tau)\,d\tau.\]
The function $\varphi_k$ belongs to $\AC(I)$ and
$\varphi_k'=v_k$ almost everywhere.
Notice that $\varphi_k(a)=a$ and $\varphi_k$ is strictly increasing
since, for $t_1<t_2$ in $I$,
\[\varphi_k(t_2)-\varphi_k(t_1)\,=\,\int_{t_1}^{t_2}v_k(\tau)\,d\tau\,>\,0\,.\]

\noindent
Step 3: {\em The set $A_k$.} We wish that the image of $\varphi_k$ is $I$. This happens  if
$\varphi_k(b)=b$, i.e.,
\[\begin{aligned}b-a\,=\,|I|\,=\,\int_a^bv_k\,dt
&\,=\,\int_{S_k}\dfrac{|y'|}{k}\,dt+
\int_{A_k}
\dfrac12
\,dt+\int_{I\setminus (S_k\cup A_k)}1\,dt\\
&\,=\,\int_{S_k}\dfrac{|y'|}{k}\,dt+\dfrac{|A_k|}{2}+|I|-|S_k|-|A_k|\,.
\end{aligned}\]
Taking into account that $|S_k|=\displaystyle\int_{S_k}1\,dt$, the previous condition can be rewritten as
\begin{equation}\label{tag:defSigma}
2\displaystyle\int_{S_k}\Big(\dfrac{|y'|}{k}-1\Big)\,dt
\,=\,{|A_k|}\,.\end{equation}
Notice that, since $|y'|\ge k$ on $S_k$, then, using~\eqref{fili},
\begin{equation}\label{fiji}
	0\,\le\,
	\int_{S_k}\Big(\dfrac{|y'|}{k}-1\Big)\,dt
\,\le\, \dfrac1k\int_I|y'|\,dt-|S_k|\to 0\quad\text{as}\quad
k\to +\infty\,.\end{equation}
We will take $A_k$ to be a subset of the set
$\Omega_\lambda$ introduced in step 1.
For $k>\lambda$, the sets $\Omega_\lambda$ and $S_k$ are disjoint, thus any
subset
$A_k$
of $\Omega_\lambda$ satisfies
\begin{equation}
	\label{firsttwo}
	A_k\subseteq I\setminus S_k\,,\quad
	|y'|\le \lambda\text{ on $A_k$}\,.
\end{equation}
Thanks to the limit~\eqref{fiji}, we can find
 $k>\lambda$ large enough so that
 \[
	 |\Omega_\lambda|\,>\,2
	 \displaystyle\int_{S_k}\Big(\dfrac{|y'|}{k}-1\Big)\,dt
 \]
and we choose for $A_k$
a subset of $\Omega_\lambda$
whose Lebesgue measure satisfies~\eqref{tag:defSigma}.
\goodbreak
%
    \smallskip

\noindent
Step 4: {\em The sequence $(y_k)_{k\geq 1}$ of Lipschitz functions.}
At this stage,
we assume that $k$ is large enough
so that there exists a set $A_k$ satisfying the conditions~\eqref{tag:defSigma} and~\eqref{firsttwo}.
The associated function $\varphi_k:I\to I$ is
bijective. We denote by $\psi_k$ its inverse
and we set
    \[\forall s\in I\qquad y_k(s)=y(\psi_k(s)).\]
    Clearly $y_k$ satisfies the required boundary conditions:
 \[y_k(a)=y(\psi_k(a))=y(a)=A,\quad y_k(b)=y(\psi_k(b))=y(b)=B.\]
 Since $v_k'\ge1/2$ then $\psi_k$ is Lipschitz; it follows   that $y_k$ is absolutely continuous (see \cite[IX,\S3, Theorem 5]{Natanson}).
 Moreover $y_k$ is Lipschitz: indeed, from the chain  rule \cite{Serrin},
 \[\forall s\in I\qquad y_k'(s)
 \,=\,\dfrac{y'(\psi_k(s))}{\varphi_k'(\psi_k(s))}
 \,=\,
 \begin{cases}k\dfrac{y'(\psi_k(s))}{|y'(\psi_k(s))|}&\text{ if }s\in \varphi_k(S_k)\,,\\
    2y'(\psi_k(s))&\text{ if } s\in \varphi_k(A_k)\,,\phantom{\frac{y'}{y'}}\\
    y'(\psi_k(s))&\text{ otherwise\,.}\phantom{\frac{y'}{y'}}\end{cases}\]
Since $|y'|\le k$ outside of $S_k$, we have $|y_k'|\le 2k$ on $I$.
\smallskip

\noindent
Step 5: {\em The value of $F(y_k)$.}
 The change of variable  $t=\psi_k(s)\Leftrightarrow s=\varphi_k(t)$ gives
 \begin{equation}
 \begin{aligned}F(y_k)&=\int_a^bL(y_k(s), y_k'(s))\,ds=\int_a^bL\Big(y(\psi_k(s)), \dfrac{y'(\psi_k(s))}{\varphi_k'(\psi_k(s))}
 \Big)\,ds\\
 &=\int_a^bL\Big(y(t), \dfrac{y'(t)}{\varphi_k'(t)}\Big)\varphi_k'(t)\,dt\,.
 \end{aligned}
 \end{equation}
Since $\varphi_k'=1$ on $I\setminus (S_k\cup A_k)$
and $\varphi_k'=1/2$ on $A_k$, we decompose $F(y_k)$ as
 \begin{equation}
	 \label{tag:integralF}
		 F(y_k)\,=\,\int_{I\setminus (S_k\cup A_k)}L(y(t),y'(t))\,dt+\mathcal I_{A_k}+\mathcal I_{S_k}\,,
	 \end{equation}
 where we set
 \[\mathcal I_{A_k}=\dfrac12\int_{A_k}L(y, 2y')\,dt\,,\quad \mathcal I_{S_k}=\int_{S_k}L\Big(y, \dfrac{y'}{\varphi_k'}
 \Big)\varphi_k'\,dt\,.
 \]
Recall that our aim is to prove that, for $k$ large enough,
$F(y_k)\le F(y)+1/{k}$.
Since $L$ is non--negative, we have obviously from~\eqref{tag:integralF}
that
\begin{equation}
	\label{ulti}
	F(y_k)\,\le\,
	F(y)+\mathcal I_{A_k}+\mathcal I_{S_k}\,.
\end{equation}
Our final goal is to estimate the two terms
 $\mathcal I_{A_k}$, $\mathcal I_{S_k}$.
    \smallskip

\noindent
Step 6:
{\em Estimate of $\mathcal I_{A_k}$.}
This is the easiest of the two terms to estimate. Indeed, recall that $|y'|\le \lambda$ on $A_k$. Now,
the range $y(I)$ of $y$ is a bounded interval and
$L$, being a continuous function, is bounded by a constant $m$ on $y(I)\times [-2\lambda, 2\lambda]$. It follows from~\eqref{fiji} that
    \begin{equation}\label{tag:ISIGMA}\mathcal I_{A_k}\,\le\, \dfrac12\int_{A_k}m\,dt=\dfrac{m}2|A_k|\,\to\, 0
    \quad\text{as}\quad k\to +\infty\,.\end{equation}
Step 7:
{\em Estimate of $\mathcal I_{S_k}$.}
We will apply the following Lemma~\ref{prop:derivative}
to the function
 $\ell(v)=L(y,v)$ for a fixed value of $y\in\R$.
 \begin{lemma}\label{prop:derivative} Let $\ell$ be a convex and $C^1$ function from $\R$ to $\R$
and let $v\in \R$. The function
$\phi_v$ defined by
$$\forall \mu>0\qquad \phi_v(\mu)\,=\,\smash{\ell\Big(\frac{v}{\mu} \Big)}\mu\,$$
is convex
on $(0, +\infty)$ and its derivative is given by
$\phi_v'(\mu)=P\big({v}/{\mu}\big)$, where
\begin{equation}
\label{dfp}
\forall w\in\R\qquad P(w)\,=\,\ell(w)-w\ell'(w)\,.
\end{equation}
%
This function $P$ is non-decreasing on $(-\infty, 0)$ and non-increasing on $(0, +\infty)$.
\end{lemma}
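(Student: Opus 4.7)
The plan is to establish the three assertions in the order (derivative formula), (monotonicity of $P$), (convexity of $\phi_v$), because the last two really reduce to the first after a chain rule computation.

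First I would compute $\phi_v'(\mu)$ by the product and chain rule, treating $v$ as a fixed parameter. Writing $\phi_v(\mu)=\mu\,\ell(v/\mu)$, one gets
\[
\phi_v'(\mu)\,=\,\ell\!\left(\frac{v}{\mu}\right)+\mu\cdot\ell'\!\left(\frac{v}{\mu}\right)\cdot\left(-\frac{v}{\mu^{2}}\right)\,=\,\ell\!\left(\frac{v}{\mu}\right)-\frac{v}{\mu}\,\ell'\!\left(\frac{v}{\mu}\right)\,=\,P\!\left(\frac{v}{\mu}\right),
\]
which is exactly the claimed formula.

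Next I would deduce the monotonicity of $P$ from convexity of $\ell$ alone, without invoking a second derivative (since we only assume $\ell\in C^{1}$). For $0<w_{1}<w_{2}$ the tangent-line inequality for a convex $C^{1}$ function gives $\ell(w_{2})-\ell(w_{1})\le\ell'(w_{2})(w_{2}-w_{1})$, and since $\ell'$ is non-decreasing and $w_{1}>0$, $\ell'(w_{2})w_{1}\ge\ell'(w_{1})w_{1}$, so
\[
\ell(w_{2})-\ell(w_{1})\,\le\,w_{2}\,\ell'(w_{2})-w_{1}\,\ell'(w_{1}),
\]
which rearranges to $P(w_{2})\le P(w_{1})$. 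The argument for $w_{1}<w_{2}<0$ is symmetric: I would use $\ell(w_{2})-\ell(w_{1})\ge\ell'(w_{1})(w_{2}-w_{1})$ and exploit the fact that $w_{2}<0$ together with $\ell'(w_{1})\le\ell'(w_{2})$ to conclude $P(w_{1})\le P(w_{2})$.

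Finally, for the convexity of $\phi_v$ on $(0,+\infty)$, it suffices to check that $\mu\mapsto P(v/\mu)$ is non-decreasing. I would split into cases according to the sign of $v$. If $v>0$, then $\mu\mapsto v/\mu$ is a decreasing map $(0,\infty)\to(0,\infty)$, and composing with $P$ (non-increasing on $(0,\infty)$) yields a non-decreasing function. If $v<0$, then $\mu\mapsto v/\mu$ is an increasing map $(0,\infty)\to(-\infty,0)$, and composing with $P$ (non-decreasing on $(-\infty,0)$) is again non-decreasing. The case $v=0$ is trivial since $\phi_{0}(\mu)=\mu\,\ell(0)$ is affine. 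The only real obstacle is the monotonicity of $P$, and that is the step I have to handle with some care because we cannot differentiate $P'(w)=-w\ell''(w)$ as we only have $\ell\in C^{1}$; but the convexity inequality together with monotonicity of $\ell'$ bypasses this issue cleanly.
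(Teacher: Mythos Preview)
Your proof is correct and follows essentially the same route as the paper: compute $\phi_v'$ by the product and chain rules, establish the monotonicity of $P$ from the convexity of $\ell$, and then read off that $\phi_v'(\mu)=P(v/\mu)$ is non-decreasing. The only cosmetic difference is in the monotonicity step: the paper writes $\ell(v_2)-\ell(v_1)=\ell'(c)(v_2-v_1)$ via the mean value theorem and factors $P(v_2)-P(v_1)=v_2(\ell'(c)-\ell'(v_2))+v_1(\ell'(v_1)-\ell'(c))$, whereas you use the tangent-line inequality directly; both are equivalent one-line applications of convexity.
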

\noindent
There is a natural geometric interpretation of the function $P$. Indeed,
for $w\in\R$, $P(w)$ is the ordinate of the intersection of  the vertical axis and the tangent line to the graph of $\ell$ at $w$ (see Figure~\ref{fig:tangent}).
\begin{figure}[!ht]
\begin{center}
\includegraphics[width=0.7\textwidth]{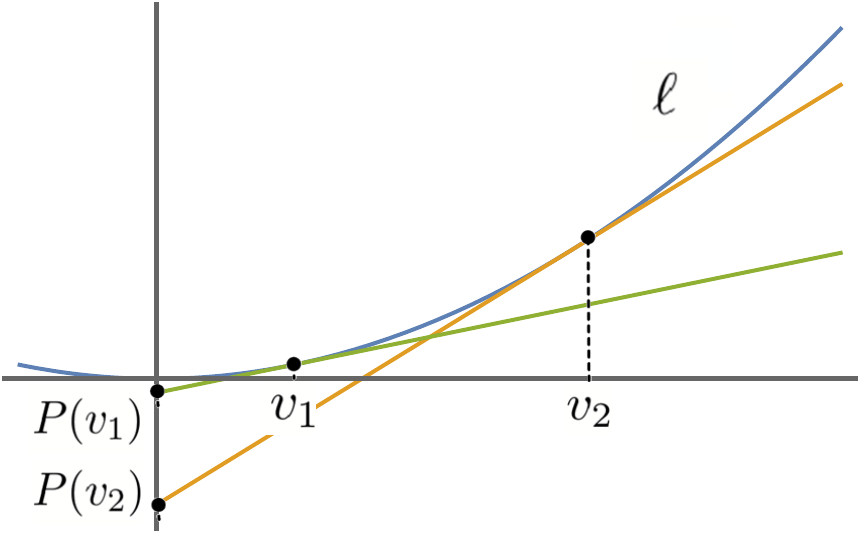}
\caption{{\small Monotonicity of $P$ and the graphical interpretation of $P(v)=\ell(v)-v\ell'(v)$.}}
\label{fig:tangent}
\end{center}
\vskip-20pt
\end{figure}
\begin{proof}[Proof of Lemma~\ref{prop:derivative}]
Let $v\in\R$ be fixed.
The derivative of $\phi_v$ is given by
\begin{equation}
\label{derph}
\forall  \mu>0\qquad
\phi_v'(\mu)=\ell\Big(\dfrac v{\mu}\Big)-\dfrac{v}{\mu}\ell'\Big(\dfrac v{\mu}\Big)=P\Big(\dfrac v{\mu}\Big)\,,\end{equation}
where $P$ is the function defined in~\eqref{dfp}.
Let $v_1<v_2$, we have
\[
	P(v_2)-P(v_1)\,=\,
	\ell(v_2)-v_2\ell'(v_2)-
	\ell(v_1)+v_1\ell'(v_1)\,.
\]
By the mean value theorem, there exists $c\in (v_1,v_2)$ such that
	$\ell(v_2)-\ell(v_1)=\ell'(c)(v_2-v_1)$, whence
	\begin{equation}
		\label{sipl}
		P(v_2)-P(v_1)\,=\,
		v_2(\ell'(c)-\ell'(v_2))+
		v_1(\ell'(v_1)-\ell'(c))
		\,.
	\end{equation}
Since the function $\ell$ is convex, its derivative $\ell'$ is non--decreasing,
and the terms in parenthesis in~\eqref{sipl} are non--positive. We conclude
that
$P$ is non-decreasing on $(-\infty, 0)$ and non-increasing on $(0, +\infty)$.
The formula~\eqref{derph} shows then that the derivative of
$\phi_v$ is non-decreasing on
$(0, +\infty)$, hence
$\phi_v$ is convex on
$(0, +\infty)$.
\end{proof}\noindent
 Let's continue with step 7 of the proof.
 Fix $t\in S_k$ and consider the auxiliary function
    $\Phi_{k,t}$
    defined by
    \[\Phi_{k,t}(\mu)=L\Big(y(t), \frac{y'(t)/\varphi_k'(t)}{\mu}\Big)\mu\,.\]
    \goodbreak
\noindent
By hypothesis, the map
$v\mapsto L(y, v)$ is convex, hence
Lemma~\ref{prop:derivative}  applied to
the function
 $\ell(v)=L(y,v)$ for a fixed value of $y\in\R$
implies that
$\Phi_{k,t}$ is convex on $(0,+\infty)$.
Using the classical fact that the graph of a convex function lies
above its tangent line at any point, we obtain
\begin{equation}\label{mult}
\Phi_{k,t}\Big(\dfrac1{\varphi_k'(t)}\Big)-\Phi_{k,t}(1)\ge \Phi_{k,t}'(1)\Big( \dfrac1{\varphi_k'(t)}-1 \Big)\,.\end{equation}
The derivative of
    $\Phi_{k,t}$ is also computed in
Lemma~\ref{prop:derivative}. We have
\begin{equation}\label{dphi}
\Phi_{k,t}'(1)=P\Big(\dfrac{y'(t)}{\varphi_k'(t)}\Big)\end{equation}
where
$P(v)=L(y(t),v)-vL_v(y(t), v).$
Noticing that
    \[\Phi_{k,t}(1)=L\Big(y(t), \frac{y'(t)}{\varphi_k'(t)}\Big),\quad \Phi_{k,t}\Big(\dfrac1{\varphi_k'(t)}\Big)=L(y(t), y'(t))\dfrac1{\varphi_k'(t)}\,,\]
    multiplying both sides of the inequality~\eqref{mult} by $\varphi_k'(t)$, and using~\eqref{dphi}, we get
\begin{equation}\label{tag:ineqfin}L\Big(y(t), \frac{y'(t)}{\varphi_k'(t)}\Big)\varphi_k'(t)\le L(y(t), y'(t))+
P\Big(\dfrac{y'(t)}{\varphi_k'(t)}\Big)
    (\varphi_k'(t)-1).\end{equation}
Since $t\in S_k$, then $\varphi'_k(t)=|y'(t)|/k$ 
and therefore
\[
P\Big(\dfrac{y'(t)}{\varphi_k'(t)}\Big)
\,=\,
\begin{cases}P(k)&\text{ if }y'(t)>0\\P(-k)&\text{ if }y'(t)<0\end{cases}\,.\]
It was also shown in
Lemma~\ref{prop:derivative} that
$P$ is non-decreasing on $(-\infty, 0)$ and non-increasing on $(0, +\infty)$,
thus
\begin{equation}\begin{aligned}P(-k)\,\le\, P(-1)&\,=\,L(y(t),-1)+L_v(y(t), -1)\,,\\
P(k)\,\le\, P(1)&\,=\,L(y(t),1)-L_v(y(t), 1)\,.\end{aligned}\end{equation}
Since $t\mapsto L(y(t), \pm 1), t\mapsto L_v(y(t), \pm 1)$ are bounded on $I$, there exists $M\ge 0$ (not depending on $k, t$) satisfying
$
P\big({y'(t)}/{\varphi_k'(t)}\big)
\,\le\, M$ on $S_k$.
Since
$\varphi_k'(t)={|y'(t)|}/k\ge 1$
for $t\in S_k$,
it follows from \eqref{tag:ineqfin} that, on $S_k$,
\[L\Big(y(t), \frac{y'(t)}{\varphi_k'(t)}\Big)\varphi_k'(t)\le L(y(t), y'(t))+M \Big(\dfrac{|y'(t)|}k-1\Big).\]
By integrating {both terms of the inequality }on $S_k$, we get
\begin{equation}\label{tag:IS}0\le \mathcal I_{S_k}\le \int_{S_k}L(y(t), y'(t))\,dt+M
	\int_{S_k}\Big(\dfrac{|y'(t)|}{k}-1\Big)\,dt\,.
\end{equation}
The Lebesgue measure of $S_k$ goes to
$0$ as $k$ goes to $\infty$
by~\eqref{fili},
and so does
the first integral by the dominated convergence theorem.
The second integral goes to $0$ as $k$ goes to $\infty$ by~\eqref{fiji}.
Thus it follows from \eqref{ulti},  \eqref{tag:ISIGMA} and \eqref{tag:IS}  that, for $k$ large enough,
$F(y_k)\le F(y)+1/{k}$, and this concludes the proof of
the theorem.$\qed$
\section*{Acknowledgments.}
The authors would like to express their gratitude to the reviewers for their thorough and careful review of the manuscript, as well as for their invaluable comments.

\bibliographystyle{amsplain}
\bibliography{Monthly2}

\providecommand{\bysame}{\leavevmode\hbox to3em{\hrulefill}\thinspace}
\providecommand{\MR}{\relax\ifhmode\unskip\space\fi MR }
\providecommand{\MRhref}[2]{%
  \href{http://www.ams.org/mathscinet-getitem?mr=#1}{#2}
}
\providecommand{\href}[2]{#2}
\begin{thebibliography}{10}

\bibitem{ASC}
G.~Alberti and F.~Serra~Cassano, \emph{Non-occurrence of gap for
  one-dimensional autonomous functionals}, Calculus of variations,
  homogenization and continuum mechanics ({M}arseille, 1993), Ser. Adv. Math.
  Appl. Sci., vol.~18, World Sci. Publ., River Edge, NJ, 1994, pp.~1--17.

\bibitem{BM1}
P.~Bettiol and C.~Mariconda, \emph{A new variational inequality in the calculus
  of variations and {L}ipschitz regularity of minimizers}, J. Differential
  Equations \textbf{268} (2020), no.~5, 2332--2367. \MR{4046192}

\bibitem{BM2}
\bysame, \emph{A {D}u {B}ois-{R}eymond convex inclusion for non-autonomous
  problems of the {C}alculus of {V}ariations and regularity of minimizers},
  Appl. Math. Optim. \textbf{83} (2021), 2083–2107.

\bibitem{BMT}
P.~Bousquet, C.~Mariconda, and G.~Treu, \emph{On the {L}avrentiev phenomenon
  for multiple integral scalar variational problems}, J. Funct. Anal.
  \textbf{266} (2014), 5921--5954.

\bibitem{BousquetAUTON}
Pierre Bousquet, \emph{Non occurence of the {Lavrentiev} gap for
  multidimensional autonomous problems}, Ann. Sc. Norm. Super. Pisa, Cl. Sci.
  (5) \textbf{24} (2023), no.~3, 1611--1670 (English).

\bibitem{BB}
G.~Buttazzo and M.~Belloni, \emph{A survey on old and recent results about the
  gap phenomenon in the calculus of variations}, Recent developments in
  well-posed variational problems, Math. Appl., vol. 331, Kluwer Acad. Publ.,
  Dordrecht, 1995, pp.~1--27.

\bibitem{GBH}
G.~Buttazzo, M.~Giaquinta, and S.~Hildebrandt, \emph{One-dimensional
  variational problems}, Oxford Lecture Series in Mathematics and its
  Applications, vol.~15, The Clarendon Press, Oxford University Press, New
  York, 1998, An introduction.

\bibitem{CFM}
A.~Cellina, A.~Ferriero, and E.~M. Marchini, \emph{Reparametrizations and
  approximate values of integrals of the calculus of variations}, J.
  Differential Equations \textbf{193} (2003), no.~2, 374--384. \MR{1998639}

\bibitem{CerfM}
R.~Cerf and C.~Mariconda, \emph{Occurrence of gap for one-dimensional scalar
  autonomous functionals with one end point condition}, Ann. Sc. Norm. Super.
  Pisa, Cl. Sci. (2022), \url{https://doi.org/10.2422/2036-2145.202209_007}.

\bibitem{CVTrans}
F.~H. Clarke and R.~B. Vinter, \emph{Regularity properties of solutions to the
  basic problem in the calculus of variations}, Trans. Amer. Math. Soc.
  \textbf{289} (1985), 73--98.

\bibitem{Lavrentiev}
M.~Lavrentiev, \emph{Sur quelques problèmes du calcul des variations}, Ann.
  Mat. Pura App. \textbf{4} (1926), 107--124.

\bibitem{Mania}
B.~Mani\`a, \emph{Sopra un esempio di {L}avrentieff}, Boll. Un. Matem. Ital.
  \textbf{13} (1934), 147--153.

\bibitem{CM5}
C.~Mariconda, \emph{Non-occurrence of gap for one-dimensional non-autonomous
  functionals}, Calc. Var. Partial Differ. Equ. \textbf{62} (2023), no.~2, 22,
  Id/No 55.

\bibitem{CM7}
\bysame, \emph{Non-occurrence of the {L}avrentiev gap for a {B}olza type
  optimal control problem with state constraints and no end cost}, Commun.
  Optim. Theory \textbf{12} (2023), 1--18.

\bibitem{CM8}
\bysame, \emph{Avoidance of the {L}avrentiev gap for one-dimensional
  non-autonomous functionals with constraints}, Adv. Calc. Var. (2024), (in
  press).

\bibitem{Natanson}
I.~P. Natanson, \emph{Theory of functions of a real variable}, Frederick Ungar
  Publishing Co., New York, 1955, Translated by Leo F. Boron with the
  collaboration of Edwin Hewitt. \MR{0067952}

\bibitem{Serrin}
J.~Serrin and D.~E. Varberg, \emph{A general chain rule for derivatives and the
  change of variables formula for the {L}ebesgue integral}, Amer. Math. Monthly
  \textbf{76} (1969), 514--520. \MR{247011}

\end{thebibliography}





\end{document}